\newtheorem{thm}{Theorem}[section]
\newtheorem{lem}[thm]{Lemma}
\newtheorem{cor}[thm]{Corollary}
\theoremstyle{definition}
\newtheorem{defi}[thm]{Definition}
\newtheorem{remark}[thm]{Remark}
\newcommand{\CC}{{\mathbb C}}
\newcommand{\p}{{p(\cdot)}}
\newcommand{\q}{{q(\cdot)}}
\newcommand{\pprime}{{p'(\cdot)}}
\newcommand{\essi}{\operatornamewithlimits{ess\,inf}}
\newcommand{\esss}{\operatornamewithlimits{ess\,sup}}
\providecommand{\norm}[1]{\lVert#1\rVert}
\begin{document}

\title[Variable Exponent Fock Spaces]{Variable Exponent Fock Spaces}

\author[G. A. Chac\'on]{Gerardo A. Chac\'on\footnote{Supported by research project 2015004: ``Contribuciones a la teor\'ia de operadores en espacios de funciones anal\'iticas''. Universidad Antonio Nari\~no}}

\address{Universidad Antonio Nari\~no, Bogot\'a, Colombia} 
\email{gerardoachg@uan.edu.co}

\author[G. R. Chac\'on]{Gerardo R. Chac\'on}
\address{Gallaudet University, Department of Science,  Technology, and Mathematics,  800 Florida Ave. NE, Washington, DC 20002, USA}
\email{gerardo.chacon@gallaudet.edu}

\begin{abstract}
In this article we introduce Variable exponent Fock spaces and study some of their basic properties such as the boundedness of evaluation functionals, density of polynomials, boundedness of a Bergman-type projection and duality.
\end{abstract}

\keywords{Fock Spaces;  Variable Exponent Lebesgue Spaces; Bergman Projection}

\subjclass{Primary 30H20, Secondary 46E30} 

\maketitle

\section{Introduction}
Variable Exponent Lebesgue spaces  are a generalization of classical Lebesgue spaces $L^p$  in which the exponent $p$ is a measurable function. Such spaces where introduced by  Orlicz \cite{orlicz} and developed by Kov\'a{\v{c}}ik and R\'akosn\'{i}k \cite{kovrak}. For details on such spaces one can refer to \cite{CUF,die_has,kovrak,rafroj2014} and references therein.    

Recently, there has been some interest in variable exponent function spaces consisting of analytic functions defined on a complex domain. For example, in \cite{KP1} and \cite{KP2} variable exponent Hardy spaces of analytic functions in the unit disk are considered. In \cite{KS} a version of $BMO$ spaces with variable exponents is considered. Bergman spaces with variable exponents have been studied in \cite{charaf2014,  charaf2016b, charaf2016}.

In this article, we are interested in studying Fock spaces under the context of variable exponents. Those are spaces of entire functions that belong to a weighted $L^\p$ space with respect to the Gaussian measure. The classical case have received a great interest in the last years as can be seen in \cite{FS}. We will make extensive use of such reference throughout this article. 

One property of variable exponent Fock spaces that makes its study relevant is that the Gaussian measure is not a Muckenhoupt weight, this makes it difficult to use one important resource in the theory of variable exponent Lebesgue spaces: the boundedness of the maximal function. 

The article is distributed as follows. In next section we will present some preliminary concepts and results as well as the notation that will be used throughout  the rest of the article. In Section 3, we will first prove a version of H\"older's inequality to obtain an equivalent norm in variable exponent Fock spaces. Then we will show that evaluation functionals are bounded and prove an inclusion relation between variable exponent Fock spaces with different exponents. Finally we characterize the dual space.

For the rest of the paper, we will use the notation $a\lesssim b$ if there exists a constant $C>0$, independent of $a$ and $b$,  such that $a\leqslant Cb$. Similarly, we use $a\asymp b$ if we have  $a\lesssim b \lesssim a$.

\section{Preliminaries}

\subsection{Fock Spaces}
 We will need  some properties of the classical Fock spaces that we include here for the sake of completeness. This results will be taken from \cite{FS}.

\begin{defi}
Let $A$ denote the Lebesgue area measure defined on the complex plane $\CC$ and let $1\leq p<\infty$. Denote as $L^p=L^p(\CC)$ the Lebesgue space of $p$-integrable functions with respect to the measure $A$. We will  denote as $\mathcal{L}^p=\mathcal{L}^p(\CC)$ the Banach space of all measurable functions $f:\CC\to\CC$ such that 
\[
\|f\|_{{{L}^p}}^p=\frac{p}{\pi}\int_\CC \left|f(z)e^{-\frac{\alpha}{2}|z|^2}\right|^p\,\dif A(z)<\infty.
\] In other words, $\mathcal{L}^p(\CC)$ consists of all functions $f$ such that $fe^{-|\cdot|^2}\in L^p(\CC)$.
The  {\itshape Fock space} $\mathcal{F}^p$ is defined as the space of all entire functions that belong to $\mathcal{L}^p$.  
\end{defi}

\begin{remark}
In the literature, the $\mathcal{L}^p$ spaces just defined are known as {\itshape weighted} $L^p$ spaces in which the weight (in this case the Gaussian weight) acts as a {\itshape multiplier}. 
\end{remark}

For every $z\in\CC$, the \textit{evaluation functional} $\gamma_z:\mathcal{F}^{p}\longrightarrow \CC$ defined as 
\begin{equation}\label{eq:evaluation_constant}
\gamma_z(f):=f(z)
\end{equation}
is bounded since the following inequality holds for every $f\in \mathcal{F}^p$.

\begin{equation}\label{eq:ineq_eval_constant}
|f(z)|\leq \|f\|_{{\mathcal{F}^p}}e^{|z|^2}.
\end{equation}

The space $\mathcal{F}^p$ is a closed subspace of $\mathcal{L}^p$ and consequently it is a Banach space.

In the case $p=2$, the Fock space $\mathcal{F}^2$ is a Hilbert space with inner product 
\begin{equation}\label{eq:inner_prod}
\langle f,g\rangle =\frac{2}{\pi}\int_\CC f(z)\overline{g(z)}e^{-2|z|^2}\dif A(z).
\end{equation}
As a consequence of Riesz representation theorem, for every $z\in \CC$, there exists an element $K_z\in \mathcal{F}^2$ such that for every $f\in \mathcal{F}^2$, 
\begin{equation}\label{eq:reproducing}
\langle f,K_z\rangle=f(z).
\end{equation}
The functions $K_z$ are called {\itshape reproducing kernels} and are given by
\begin{equation}\label{eq:rep}
K_z(w)=e^{2w\overline{z}}.
\end{equation}

It is shown in \cite{tung} that the reproducing formula \eqref{eq:reproducing} holds for general functions $f\in \mathcal{F}^p$ in the sense that 
\begin{equation}\label{eq:represent}
f(z)=\frac{2}{\pi}\int_\CC f(w)e^{2\overline{w}z}e^{-2|w|^2}\dif A(w)
\end{equation}

\subsection{Generalized Orlicz Spaces}
We will also need  some results from the theory of generalized Orlicz spaces that will be presented as in \cite{HHK}.

\begin{defi}
A function $\varphi:\CC\times [0,\infty)\to [0,\infty)$ is said to belong to the class $\Phi$ if for every $t\in [0,\infty)$ the function $\varphi(\cdot,t)$ is measurable and for every $z\in\CC$, the function $\varphi(z,\cdot)$ satisfies the  following conditions:
\begin{enumerate}[(i)]
\item  $\varphi(z,\cdot)$ is increasing 
\item  $\varphi(z,\cdot)$ is left-continuous 
\item  $\varphi(z,\cdot)$ is convex
\item  $\varphi(z,0)=\lim_{t\to 0^+}  \varphi(z,t)=0$ and  $\lim_{t\to \infty}  \varphi(z,t)=\infty$
\end{enumerate}
\end{defi}

Some examples of functions of the class $\Phi$ are:
\[\varphi_1(z,t)=t^{p(z)}\qquad \varphi_2(z,t)=\frac{1}{p(z)}t^{p(z)}\qquad \varphi_3(z,t)=t^{p(z)}e^{-|z|^2p(z)}\] where $p:\CC\to[1,\infty)$ is a measurable function.

\begin{defi}\label{def:general_orlicz}
Given a function $\varphi$ in the class $\Phi$, define for every measurable function $f:\CC\to\CC$, the modular
 \begin{equation}\label{eq:modular_phi}
 \rho_{\varphi}(f)=\int_\CC \varphi\left(z,|f(z)|\right)\dif A(z)
 \end{equation}
The {\itshape Generalized Orlicz space} $L^\varphi$ is defined as the set of all measurable functions $f$ such that $\rho_\varphi(\lambda f)<\infty$ for some $\lambda>0$. $L^\varphi$ is a Banach space when eqquiped with the {\itshape Luxemburg-Nakano norm:} 
\begin{equation}\label{eq:norm_phi}
\|f\|_{L^\varphi}= \inf \left\{ \lambda > 0:
\rho_{\varphi}\left(\frac{f}{\lambda}\right)\leqslant 1 \right\}
\end{equation}
\end{defi}

In the case in which $\varphi=\varphi_1$, we call $L^\varphi=L^{\p}$ a {\itshape variable exponent Lebesgue space}. The basics on the subject may be found in the monographs \cite{CUF,die_has}. 
Denote as
$p^+=    \esss_{z\in\CC}p(z)$ and $ p^-=\essi_{z\in\CC}p(z)$.
The measurable function $p:\CC \rightarrow [1,\infty)$, is called a \textit{variable exponent}, and  the set of all variable exponents with $p^+<\infty$ is denoted as $\mathscr P(\CC)$. 

We will be particularly interested in the case in which $\varphi=\varphi_3$. We will denote such generalized Orlicz space as $\mathcal{L}^\p$ to stress the dependence of the exponent and we use the calligraphic $\mathcal{L}$ in order to differentiate from the unweighted Lebesgue space.

Some regularity conditions will also be needed to the exponent $p$.

\begin{defi}
	A function $p:\CC \longrightarrow [1,\infty)$ is said to be \textit{$\log$-H\"older continuous}  or to satisfy the \textit{Dini-Lipschitz condition} on $\CC$ if there exists a positive constant $C_{\log}$ such that
\begin{equation}
	|p(z)-p(w)|\leqslant \frac{C_{\log}}{\log\left( 1/|z-w| \right)},
	\label{eq:local}
\end{equation}
for all $z, w \in \CC$ such that $|z-w|<1/2$. 
The function $p$ is said to satisfy the \textit{$\log$-H\"older decay condition}, if there exists $p_\infty \in \mathbb [1,\infty)$ and a positive constant $C$ such that
\begin{equation}
	|p(z)-p_\infty|\leqslant \frac{C}{\log\left( e+|x| \right)}
	\label{eq:decay}
\end{equation}
for all $z \in \CC$. We say that the function $p$ is \textit{globally $\log$-H\"older continuous in $\CC$} if it satisfies \eqref{eq:local} and \eqref{eq:decay}. We denote by $\mathscr P^{\log} (\CC)$ the set of all globally $\log$-H\"older continuous functions in $\CC$ for which 
\[
1< p_- \leqslant p_+ < \infty.	
\]
\end{defi}
It is known that condition \eqref{eq:local} is equivalent to the following intequality 
\[
|B|^{p_-(B)-p_+(B)}\le C,
\] to hold for all balls $B\subset\CC$. Here $|B|$ stands for the Lebesgue measure of $B$, $p^+(B)=    \esss_{z\in B}p(z)$ and $ p^-(B)=\essi_{z\in B}p(z)$.

\subsection{Variable Exponent Fock Spaces}
We are ready to introduce the main concept of this article.

\begin{defi}
Let $p:\CC\to[1,\infty)$ be a measurable function in $\mathscr{P}(\CC)$. The {\itshape Variable Exponent Fock Space} $\mathcal{F}^\p$ is defined as the set of all entire functions that belong to the generalized Orlicz space $\mathcal{L}^\p$.

In othe words, $\mathcal{F}^\p$ is the set of entire functions such that \[\int_\CC |f(z)|^{p(z)} e^{-|z|^2p(z)}\dif A(z)<\infty.\]
In order to have a better correspondence with the definition of Fock spaces for constant exponents, a modification will be made to the modular defined in \eqref{eq:modular_phi}. 
We will denote as \[C_\p:=\int_\CC e^{-|z|^2p(z)}\dif A(z)\] and define the modular \[\rho_\p(f)=C_\p^{-1}\int_\CC |f(z)|^{p(z)} e^{-|z|^2p(z)}\dif A(z).\] The choice of the constant is made so that $\rho_\p(1)=1$.
\end{defi}

\begin{remark}
An entire function $f$ belongs to $\mathcal{F}^\p$ if, and only if the function $z\mapsto f(z)e^{-|z|^2}$ belongs to $L^\p$.
\end{remark}

%


\subsection{Weighted Fock Spaces}
 \begin{defi}
 Fix $r>0$ and $1<p<\infty$. Let $A_{p,r}$ denote the class of weights $w:\CC\to [0,\infty)$ such that \[\sup_{z\in\CC} \left(\frac{1}{|B(z,r)|}\int_{B(z,r)}w\dif A\right)\left(\frac{1}{|B(z,r)|}\int_{B(z,r)}w^{-\frac{1}{p-1}}\dif A\right)^{p-1}\leq C_r\] for some $0<C_r<\infty$. 

We will say that $w$ belongs to the Muckenhoupt class $A_1$ if \[\esss_{z\in\CC}\frac{Mw(z)}{w(z)}<\infty\] where $M$ denotes the Hardy-Littlewood maximal operator.

It is shown in \cite[section 4.2]{CUF}  that  $A_1\subset A_{p,r}$ for every $1<p<\infty$ and $r>0$. 
 \end{defi}

 Given a weight $w$, we will denote as $L^p(w)$  the weighted Lebesgue space whereas $\mathcal{L}^p(w)$ will denote the weighted $\mathcal{L}^p$ space. The following theorem is shown in \cite{Isr2014}.

\begin{thm}\label{thm:ISRA}
The following are equivalent for any weight $w$ on $\CC$:
\begin{enumerate}[(i)]
\item $w\in A_{p,r}$ for some $r>0$.
\item The operator $H:L^p(w)\to L^p(w)$ defined as \[Hf(z)=\int_\CC e^{-|z-u|^2}f(u)\dif A(u)\] is bounded.
\item The operator $P:\mathcal{L}^p(w)\to\mathcal{F}^p(w)$  defined as \[Pg(z)=\frac{2}{\pi}\int_\CC g(w)e^{2\overline{w}z}e^{-2|w|^2}\dif A(w)\] is bounded.
\item $w\in A_{p,r}$ for all $r>0$.
\end{enumerate}
\end{thm}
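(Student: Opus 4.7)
The plan is to close the ring $(i)\Rightarrow(ii)\Rightarrow(iii)\Rightarrow(i)$ and to establish the purely weight-theoretic equivalence $(i)\Leftrightarrow(iv)$ separately. For the latter a standard covering argument suffices: a ball $B(z,r_2)$ can be covered by a uniformly bounded number of balls of radius $r_1$ (depending only on $r_2/r_1$) and vice versa, so the product of averages defining the $A_{p,r}$ class transfers, up to constants, between radii.

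For $(i)\Rightarrow(ii)$, I would decompose $Hf(z)$ into the local piece on $B(z,r)$ and a tail sum over annuli $B(z,(k+1)r)\setminus B(z,kr)$. On the local piece $e^{-|z-u|^2}\leqslant 1$, so the integral is dominated by $|B(z,r)|\,M_r f(z)$, where $M_r$ is the maximal function over balls of radius $r$; the hypothesis $w\in A_{p,r}$ yields boundedness of $M_r$ on $L^p(w)$ by a standard Muckenhoupt argument. Each annular tail carries the factor $e^{-(kr)^2}$, which provides summable decay and reduces each term to the same local-maximal bound.

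For $(ii)\Rightarrow(iii)$, the key algebraic identity
\[
e^{-|z-w|^2}=e^{-|z|^2}\,e^{|w|^2}\,\bigl|e^{2\bar{w} z}e^{-2|w|^2}\bigr|
\]
yields the pointwise domination $\bigl|Pg(z)\bigr|\,e^{-|z|^2}\leqslant\tfrac{2}{\pi}\,H\bigl(|g|\,e^{-|\cdot|^2}\bigr)(z)$; raising this to the $p$-th power, integrating against $w\,\dif A$, and invoking $(ii)$ give $\|Pg\|_{\mathcal{F}^p(w)}\lesssim\|g\|_{\mathcal{L}^p(w)}$.

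The main obstacle will be $(iii)\Rightarrow(i)$. I would test the projection on functions of the form $g_{z_0}(w)=e^{2|w|^2}\,w(w)^{-1/(p-1)}\,\chi_{B(z_0,r)}(w)$. Because $|z-w|\leqslant 2r$ whenever $z,w\in B(z_0,r)$, the reproducing kernel satisfies $\bigl|e^{2\bar{w} z}e^{-2|w|^2}\bigr|\gtrsim e^{|z|^2-|w|^2}$ there with a constant depending only on $r$, so on $B(z_0,r)$ the Fock norm of $Pg_{z_0}$ is bounded below by a multiple of $\bigl(\int_{B(z_0,r)}w^{-1/(p-1)}\,\dif A\bigr)\bigl(\int_{B(z_0,r)}w\,\dif A\bigr)^{1/p}$. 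Matching this lower bound against the direct computation of $\|g_{z_0}\|_{\mathcal{L}^p(w)}$ isolates the $A_{p,r}$ inequality at $B(z_0,r)$, and taking the supremum over $z_0\in\CC$ yields $(i)$. The delicate point is arranging the $e^{2|w|^2}$ factor in $g_{z_0}$ so that it exactly cancels the Gaussian weight inside the projection integrand while the bounded support keeps $\|g_{z_0}\|_{\mathcal{L}^p(w)}$ finite, and ensuring that the constants extracted from the lower bound on the kernel are independent of $z_0$.
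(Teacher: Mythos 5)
First, a point of comparison: the paper does not prove this theorem at all — it is quoted verbatim from \cite{Isr2014} — so there is no internal argument to measure your outline against. Taken on its own terms, your architecture is reasonable and your key identity $e^{-|z-w|^2}=e^{-|z|^2}e^{|w|^2}\bigl|e^{2\bar{w}z}e^{-2|w|^2}\bigr|$ is correct, which makes your step $(ii)\Rightarrow(iii)$ sound. However, the other three steps each contain a genuine gap.

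The most serious is $(iii)\Rightarrow(i)$. Your test function carries no phase correction, and the kernel $e^{2\bar{w}z}$ oscillates on $B(z_0,r)$ with phase $2\,\mathrm{Im}(\bar{w}z)$, whose variation over $w\in B(z_0,r)$ is of order $|z_0|r$ and hence unbounded; a lower bound on the \emph{modulus} of the kernel therefore gives no lower bound on $|Pg_{z_0}(z)|$, since the integral can cancel almost entirely. One must insert a unimodular factor (essentially $e^{-2i\,\mathrm{Im}(\bar{z}_0(w-z_0))}$), take $r$ small enough that the residual phase is less than $\pi/2$, and only then appeal to $(i)\Leftrightarrow(iv)$ to recover all radii. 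Relatedly, the factor $e^{2|w|^2}$ is one Gaussian power too many: $e^{p|w|^2}$ varies by a factor $e^{4pr|z_0|}$ across $B(z_0,r)$, so the bound you extract on $w(B)\bigl(\int_B w^{-1/(p-1)}\bigr)^{p-1}$ degrades as $|z_0|\to\infty$; the correct normalization is $g_{z_0}(w)=w(w)^{-1/(p-1)}e^{|w|^2}\chi_{B(z_0,r)}(w)$ times the phase. Second, in $(i)\Rightarrow(ii)$ the $k$-th annular term is \emph{not} reducible to "the same local-maximal bound": the average of $|f|$ over $B(z,(k+1)r)$ is not dominated by $M_rf(z)$, and the weighted operator norm of the scale-$(k+1)r$ maximal operator grows with $k$ (at worst exponentially, via chaining of the $A_{p,r}$ condition). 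The entire content of the theorem is that the Gaussian decay $e^{-(kr)^2}$ beats this growth, and your sketch asserts the conclusion without confronting it. Third, the same coupling problem undermines your $(i)\Rightarrow(iv)$ covering argument in the direction of \emph{increasing} the radius: after covering $B(z,R)$ by balls $B_j$ of radius $r$, the product $\bigl(\sum_j\int_{B_j}w\bigr)\bigl(\sum_k\int_{B_k}w^{-1/(p-1)}\bigr)^{p-1}$ contains cross terms pairing $w$ on $B_j$ with $w^{-1/(p-1)}$ on a distant $B_k$, which the $A_{p,r}$ hypothesis does not control; one needs the standard chain argument showing that averages over overlapping balls of radius $r$ are comparable. (Decreasing the radius is indeed immediate by monotonicity of the integrals.)
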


\begin{remark}\label{cor:J_op}
The operator $P$ is bounded on $\mathcal{L}^p(w)$ if and only if the operator \[\fullfunction{J}{\mathcal{L}^p(w)}{\mathcal{L}^p(w)}{g}{\int_\CC \left|g(z)e^{2\overline{z}w}e^{-2|z|^2}\right|\dif A(z)}\] is bounded.
\end{remark}
%
%

The following proposition is a version of Rubio de Francia extrapolation result in the framework of variable Lebesgue spaces. 

\begin{thm}[Thm. 5.24 in \cite{CUF}]\label{thm:extrap}
Given a set $\Omega$, suppose that for some $p_0 \geq 1$ the family $\mathcal D$ is such that for all $w\in A_1$,
\begin{equation}
	\int_{\Omega} F(x)^{p_0} w(x) \dif x \leq C_0 \int_{\Omega} G(x)^{p_0}w(x) \dif x, \quad (F,G) \in \mathcal D.
	\label{eq:extrapolation}
\end{equation}
Given $p \in \mathscr{P}(\Omega)$, if $p_0\leq p_-\leq p_+ < \infty$ and the Hardy-Littlewood maximal operator is bounded on $L^{(p(\cdot)/p_0)^\prime}(\Omega)$, then
\begin{equation}
	\norm{F}_{L^{p(\cdot)}(\Omega)} \leq C_{p(\cdot)} 	 \norm{G}_{L^{p(\cdot)}(\Omega)}
	\label{eq:extrapolationnorm}
\end{equation} for every $(F,G) \in \mathcal D$.
\end{thm}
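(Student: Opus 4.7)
The plan is to establish the variable-exponent inequality by duality against the associate space $L^{(p(\cdot)/p_0)'}(\Omega)$, reducing matters to a weighted $L^{p_0}$ estimate against a carefully chosen $A_1$ weight. The device that produces such a weight from an arbitrary nonnegative test function $h$ in the dual unit ball is the Rubio de Francia iteration algorithm, whose convergence is exactly what the hypothesis on the maximal operator ensures.

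First I would raise the target inequality to the power $p_0$ and use the identity $\|F\|_{L^{p(\cdot)}}^{p_0} = \|F^{p_0}\|_{L^{p(\cdot)/p_0}}$, which is immediate from the definition of the Luxemburg--Nakano norm. By standard duality for variable Lebesgue spaces it suffices to bound
\[
\int_\Omega F(x)^{p_0} h(x)\,dx
\]
uniformly for nonnegative $h$ with $\|h\|_{L^{(p(\cdot)/p_0)'}}\le 1$. Let $M$ denote the Hardy--Littlewood maximal operator, let $\|M\|$ be its operator norm on $L^{(p(\cdot)/p_0)'}(\Omega)$ (finite by hypothesis), and define the Rubio de Francia operator
\[
\mathcal R h(x) = \sum_{k=0}^{\infty} \frac{M^k h(x)}{(2\|M\|)^k}.
\]
Three properties are immediate: $h\le \mathcal R h$ pointwise, $\|\mathcal R h\|_{L^{(p(\cdot)/p_0)'}}\le 2\|h\|_{L^{(p(\cdot)/p_0)'}}$, and $M(\mathcal R h)\le 2\|M\|\,\mathcal R h$, so that $\mathcal R h\in A_1$ with constant depending only on $\|M\|$.

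Next I would apply the hypothesis \eqref{eq:extrapolation} with weight $w=\mathcal R h\in A_1$ together with H\"older's inequality in the variable Lebesgue scale to get
\[
\int_\Omega F^{p_0} h\,dx \le \int_\Omega F^{p_0}\,\mathcal R h\,dx \le C_0 \int_\Omega G^{p_0}\,\mathcal R h\,dx \le 2C_0\,\|G^{p_0}\|_{L^{p(\cdot)/p_0}}\,\|\mathcal R h\|_{L^{(p(\cdot)/p_0)'}}.
\]
Using the bound on $\|\mathcal R h\|$ and taking the supremum over admissible $h$ produces
\[
\|F^{p_0}\|_{L^{p(\cdot)/p_0}} \le C\,\|G^{p_0}\|_{L^{p(\cdot)/p_0}},
\]
which, after taking $p_0$-th roots, yields \eqref{eq:extrapolationnorm}.

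The delicate point, and the main obstacle if one were to carry this out from scratch, is verifying that the Rubio de Francia sum converges in $L^{(p(\cdot)/p_0)'}$ and that one genuinely has access to duality and H\"older at the variable exponent level; both facts hinge on the assumption $p_0\le p_-\le p_+<\infty$ together with the boundedness of $M$ on $L^{(p(\cdot)/p_0)'}$, the latter guaranteeing that $(p(\cdot)/p_0)'$ has no degenerate behaviour. Since the statement is quoted verbatim from \cite{CUF}, the argument above is essentially a road map matching the one given there.
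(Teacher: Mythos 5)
The paper offers no proof of this statement---it is imported verbatim as Theorem 5.24 of \cite{CUF}---and your outline is precisely the standard Rubio de Francia iteration argument used there: reduce to $\|F^{p_0}\|_{L^{p(\cdot)/p_0}}$, dualize, majorize the test function $h$ by $\mathcal{R}h\in A_1$, and apply the weighted hypothesis plus H\"older. Your sketch is correct, matching the source's approach; the only point worth flagging is the usual convention that \eqref{eq:extrapolation} is applied only to pairs for which the left-hand side is finite (or after a truncation of $F$), which is how the reference handles the implicit finiteness issue in the chain of inequalities.
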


\section{Some Properties of Variable Exponent Fock Spaces}
In this section we will start by showing a version of  H\"older inequality for $\mathcal{L}^\p$ spaces that will be useful in this specific context. It is important to notice that, as  generalized Orlicz spaces, a H\"older inequality and a duality result already exists, however here we prove a different version that better suits our purposes.

\begin{thm}[H\"older's Inequality]\label{thm:Holder}
Suppose $p\in \mathscr{P}(\CC)$ and let $p':\CC\to (1,\infty)$ be such that $\frac{1}{p(z)}+\frac{1}{p'(z)}=1$ for all $z\in \CC$. Then if $f\in  \mathcal{L}^{p(\cdot)}$ and $g\in  \mathcal{L}^{p'(\cdot)}$, we have that  \[\left|\int_\CC f(z)\overline{g(z)}e^{-2|z|^2}\dif A(z)\right|\leq 2\|f\|_{{\mathcal{L}^\p}}\|g\|_{{\mathcal{L}^\pprime}}\]
\end{thm}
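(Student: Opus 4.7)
The strategy is to reduce the statement to the classical H\"older inequality for variable exponent Lebesgue spaces on $\CC$, via the substitution $F(z):=f(z)e^{-|z|^2}$ and $G(z):=g(z)e^{-|z|^2}$.

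First I would identify the norms. Applying Definition~\ref{def:general_orlicz} to $\varphi=\varphi_3$ gives
\[
\rho_{\varphi_3}(f/\lambda)=\int_\CC \left|\frac{f(z)e^{-|z|^2}}{\lambda}\right|^{p(z)}\dif A(z)=\rho_{L^{\p}}(F/\lambda),
\]
so that $\|f\|_{\mathcal{L}^{\p}}=\|F\|_{L^{\p}(\CC)}$ and likewise $\|g\|_{\mathcal{L}^{\pprime}}=\|G\|_{L^{\pprime}(\CC)}$. In other words, $\mathcal{L}^{\p}$ is the isometric pullback of the standard variable exponent Lebesgue space $L^{\p}(\CC)$ under multiplication by the Gaussian weight $e^{-|z|^2}$.

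Next, bounding the modulus of the integral by the integral of the modulus yields
\[
\left|\int_\CC f(z)\overline{g(z)}e^{-2|z|^2}\dif A(z)\right|\le \int_\CC |F(z)G(z)|\dif A(z),
\]
and I would invoke the classical H\"older inequality for variable exponent Lebesgue spaces (e.g.\ Theorem~2.26 of \cite{CUF}): for every $F\in L^{\p}(\CC)$ and $G\in L^{\pprime}(\CC)$,
\[
\int_\CC |FG|\dif A\le \left(\frac{1}{p^-}+\frac{1}{(p')^-}\right)\|F\|_{L^{\p}}\|G\|_{L^{\pprime}}.
\]
Since $p^-\ge 1$ and $(p')^-\ge 1$, the parenthetical factor is bounded by $2$, and combining with the norm identification above delivers the desired estimate.

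I do not expect a substantial obstacle: the only delicate point is the first-step identification of the two norms, which makes the passage from $\mathcal{L}^{\p}$ to $L^{\p}(\CC)$ isometric and hence lets the standard variable exponent H\"older constant $\frac{1}{p^-}+\frac{1}{(p')^-}\le 2$ transfer directly. The whole argument is essentially a change-of-variables translation of the classical result, which is why the authors emphasise that this is a form of H\"older's inequality tailored to the weighted setting rather than a new theorem.
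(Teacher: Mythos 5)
Your proof is correct, but it takes a genuinely different route from the paper's. The paper proves the inequality from scratch: it applies Young's inequality pointwise to the normalized product $\frac{|f(z)|}{\|f\|_{\mathcal{L}^\p}}\frac{|g(z)|}{\|g\|_{\mathcal{L}^{\pprime}}}e^{-2|z|^2}$, integrates, and uses the unit-ball property of the Luxemburg norm on each of the two resulting terms (after discarding the factors $1/p(z)$ and $1/p'(z)$, which is exactly where the constant $2$ comes from). You instead observe that multiplication by the Gaussian weight is an isometry of $\mathcal{L}^{\p}$ onto the unweighted $L^{\p}(\CC)$ --- which is precisely the content of the paper's remark that $f\in\mathcal{F}^\p$ iff $fe^{-|\cdot|^2}\in L^\p$, upgraded to the level of norms --- and then quote the known variable-exponent H\"older inequality with constant $\frac{1}{p^-}+\frac{1}{(p')^-}\le 2$. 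Your route is shorter and makes the origin of the constant transparent; the paper's route is self-contained and avoids any appeal to the unweighted theory (it is essentially your cited proof inlined in the weighted setting). One point to make explicit: your isometry $\|f\|_{\mathcal{L}^\p}=\|fe^{-|\cdot|^2}\|_{L^\p}$ holds for the Luxemburg norm built from the unnormalized modular $\rho_{\varphi_3}$, not from the renormalized modular $\rho_\p=C_\p^{-1}\rho_{\varphi_3}$ introduced later in the paper; since the paper's own proof of this theorem implicitly uses the same (unnormalized) convention, the two arguments agree, but the statement of the norm identity deserves a sentence of justification.
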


\begin{proof}
It is clear that if any of the norms in the right-hand side is equal to zero, then the inequality holds. So suppose both norms are nonzero, and use Young's inequality to obtain \[\frac{|f(z)|}{\|f\|_{{\mathcal{L}^\p}}}\frac{|g(z)|}{\|g\|_{{\mathcal{L}^\pprime}}}e^{-2|z|^2}\leq \frac{|f(z)|^{p(z)}e^{-p(z)|z|^2}}{p(z)\|f\|_{{\mathcal{L}^\p}}}+\frac{|g(z)|^{{\mathcal{L}^\pprime}} e^{-p'(z)|z|^2}}{p'(z)\|g\|_{p'(\cdot)}}\] and the inequality follows. 
\end{proof}

With the previous inequality in hand, we can define an alternative norm on $\mathcal{L}^{p(\cdot)}$ as \[\interleave f\interleave_{{\mathcal{L}^\p}}=\sup\left\{\frac{2}{\pi}\left|\int_\mathbb{C}f(z)\overline{g(z)}e^{-2|z|^2}\dif A (z)\right|: g\in \mathcal{L}^{p'(\cdot)},~~ \rho_{p'(\cdot)}(g)\leq 1\right\}.\]

We gather some properties of this norm in the following lemma whose proof is routine. We aim to prove that norm $\interleave\cdot\interleave_{{\mathcal{L}^\p}}$ is equivalent to norm $\|\cdot\|_{{\mathcal{L}^\p}}$.

\begin{lem}
Suppose that $p\in \mathcal{P}(\CC)$ and let $p':\CC\to (1,\infty)$ be such that $\frac{1}{p(z)}+\frac{1}{p'(z)}=1$ for all $z\in \CC$. Then 
\begin{align*}
\interleave f\interleave_{{\mathcal{L}^\p}} &=\sup\left\{\frac{2}{\pi}\int_\mathbb{C}|f(z)||g(z)|e^{-2|z|^2}\dif A (z): g\in \mathcal{L}^{p'(\cdot)},~~ \rho_{p'(\cdot)}(g)\leq 1\right\}\\
&= \sup\left\{\frac{2}{\pi}\int_\mathbb{C}|f(z)||g(z)|e^{-2|z|^2}\dif A (z): g\in \mathcal{L}^{p'(\cdot)},~~ \rho_{p'(\cdot)}(g)<\infty \right\}
\end{align*}
\end{lem}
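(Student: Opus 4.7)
The plan is to verify both equalities by elementary modular manipulations; no Fock-space input is needed. Denote the three quantities by $A:=\interleave f\interleave_{\mathcal{L}^\p}$, $B$, and $C$ in the order they appear in the statement.

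For the first equality, $A\leq B$ is immediate from the pointwise triangle inequality
\[
\Bigl|\int_\CC f\bar g\, e^{-2|z|^2}\dif A\Bigr|\leq \int_\CC |f||g|\, e^{-2|z|^2}\dif A,
\]
since both suprema range over the same admissible set $\{\rho_\pprime(g)\leq 1\}$. The reverse $B\leq A$ is the one step requiring a real idea, and I would handle it by a pointwise phase rotation. Given $g$ with $\rho_\pprime(g)\leq 1$, set
\[
\tilde g(z):=|g(z)|\,\overline{\operatorname{sgn} f(z)},\qquad\operatorname{sgn} f(z):=\begin{cases}f(z)/|f(z)|,&f(z)\neq 0,\\ 0,&f(z)=0,\end{cases}
\]
so that $|\tilde g|=|g|$ pointwise. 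Then $\rho_\pprime(\tilde g)=\rho_\pprime(g)\leq 1$, so $\tilde g$ is admissible in the definition of $A$, while $f\overline{\tilde g}=|f||g|\geq 0$ is real and nonnegative, and therefore
\[
\Bigl|\int_\CC f\overline{\tilde g}\, e^{-2|z|^2}\dif A\Bigr|=\int_\CC |f||g|\, e^{-2|z|^2}\dif A.
\]
Taking the supremum over $g$ yields $B\leq A$.

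For the second equality, $B\leq C$ is trivial, since the constraint for $C$ is weaker. For $C\leq B$ I would rescale: the hypothesis $p\in\mathscr{P}(\CC)$ forces $p'_+<\infty$, and this gives the sub-homogeneity bound $\rho_\pprime(g/\lambda)\leq \lambda^{-p'_-}\rho_\pprime(g)$ for $\lambda\geq 1$. Choosing $\lambda\geq\max(1,\rho_\pprime(g)^{1/p'_-})$ places $g/\lambda$ in the admissible set for $B$, and the linear homogeneity of the bilinear pairing in $g$ then transports the estimate back to $g$ itself.

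The main obstacle I expect is not deep but notational: because $\pprime$ varies with $z$, the modular $\rho_\pprime$ is not homogeneous of any fixed degree, and one has to match its sub-homogeneity exponent $p'_-$ carefully against the linear scaling of the pairing against $f$. Beyond this, the whole argument rests on sign adjustment and elementary modular scaling, which is presumably why the author flags the proof as routine.
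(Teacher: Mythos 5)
Your argument for the first equality is correct and is the standard sign-adjustment argument (modulo a harmless conjugation slip: since the pairing is $f\overline{g}$, to get $f\overline{\tilde g}=|f||g|$ you need $\tilde g=|g|\operatorname{sgn}f$, not $|g|\overline{\operatorname{sgn}f}$). There is nothing in the paper to compare against here: the authors state the lemma ``whose proof is routine'' and give no argument, so the only question is whether yours closes. It does not close for the second equality, and the obstacle is not notational. Your rescaling gives $\rho_\pprime(g/\lambda)\leq 1$ for $\lambda=\max\bigl(1,\rho_\pprime(g)^{1/p'_-}\bigr)$, hence
\[
\frac{2}{\pi}\int_\CC|f||g|e^{-2|z|^2}\dif A(z)=\lambda\cdot\frac{2}{\pi}\int_\CC|f|\,\frac{|g|}{\lambda}\,e^{-2|z|^2}\dif A(z)\leq \lambda\,\interleave f\interleave_{\mathcal{L}^\p},
\]
and $\lambda$ depends on $g$ and is unbounded over the admissible class $\{\rho_\pprime(g)<\infty\}$, so ``transporting the estimate back to $g$'' only yields the useless bound $C\leq\bigl(\sup_g\lambda(g)\bigr)\cdot B=\infty\cdot B$.

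No choice of $\lambda$ can repair this, because the second equality is false as printed. Take $p\equiv 2$, $f\equiv 1$, and $g\equiv t$ a large positive constant: then $\rho_\pprime(g)=t^{2}<\infty$, while $\frac{2}{\pi}\int_\CC t\,e^{-2|z|^2}\dif A(z)=t\to\infty$, so the third supremum equals $+\infty$, whereas $\interleave 1\interleave_{\mathcal{L}^{2}}\leq\frac{4}{\pi}\|1\|_{\mathcal{L}^{2}}<\infty$ by the H\"older inequality of the paper. The point is that the constraint $\rho_\pprime(g)<\infty$ is (essentially) invariant under scaling $g\mapsto tg$ while the pairing is linear in $g$, so without a normalization the supremum blows up. A correct writeup must either flag this and restate the line with a normalization (for instance replacing the constraint by $\|g\|_{\mathcal{L}^\pprime}\leq 1$, or dividing the integral by $\|g\|_{\mathcal{L}^\pprime}$) or drop it; it cannot be proved as displayed. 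Separately, your claim that $p\in\mathscr{P}(\CC)$ forces $p'_+<\infty$ is backwards: $p^+<\infty$ gives $p'_-=(p^+)'>1$, and finiteness of $p'_+$ would instead require $p^->1$. Your sub-homogeneity bound only uses $p'_-$, so this slip is inessential to the (already doomed) step, but it should be corrected.
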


\begin{thm}
Suppose that $p\in \mathcal{P}(\CC)$ and let $p':\CC\to (1,\infty)$ be such that $\frac{1}{p(z)}+\frac{1}{p'(z)}=1$ for all $z\in \CC$. Then there exist constants $c>0$ and $C>0$ such that \[c\|f\|_{{\mathcal{L}^\p}}\leq \interleave f\interleave_{{\mathcal{L}^\p}}\leq C\|f\|_{{\mathcal{L}^\p}} \]
\end{thm}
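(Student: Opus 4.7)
My plan is to prove the two inequalities separately. The upper bound $\interleave f\interleave_{\mathcal{L}^\p}\leq C\|f\|_{\mathcal{L}^\p}$ should follow almost immediately from Theorem~\ref{thm:Holder}. For any admissible test function $g$ (that is, any $g$ with $\rho_{p'(\cdot)}(g)\leq 1$), the standard unit-ball characterization of the Luxemburg norm for generalized Orlicz spaces (which applies because $p_->1$ forces $p'_+<\infty$) yields $\|g\|_{\mathcal{L}^\pprime}\leq 1$. H\"older's inequality then gives
\[
\frac{2}{\pi}\left|\int_\CC f(z)\overline{g(z)}e^{-2|z|^2}\dif A(z)\right|\leq \frac{4}{\pi}\|f\|_{\mathcal{L}^\p}\|g\|_{\mathcal{L}^\pprime}\leq \frac{4}{\pi}\|f\|_{\mathcal{L}^\p},
\]
and taking the supremum over such $g$ delivers $C=4/\pi$.

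For the lower bound I intend to construct an explicit test function that nearly saturates the defining supremum. Assuming $\lambda:=\|f\|_{\mathcal{L}^\p}>0$ (the case $\lambda=0$ is trivial), set
\[
g(z):=\frac{f(z)}{|f(z)|}\,\frac{|f(z)|^{p(z)-1}}{\lambda^{p(z)-1}}\,e^{-(p(z)-2)|z|^2},
\]
with $g(z):=0$ where $f(z)=0$. The conjugate-exponent identities $(p(z)-1)p'(z)=p(z)$ and $(p(z)-2)p'(z)+p'(z)=p(z)$ collapse things cleanly to
\[
|g(z)|^{p'(z)}e^{-p'(z)|z|^2}=\left|\frac{f(z)}{\lambda}\right|^{p(z)}e^{-p(z)|z|^2},
\]
so $\rho_{p'(\cdot)}(g)=\rho_\p(f/\lambda)\leq 1$ and $g$ is admissible. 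Simultaneously, $f(z)\overline{g(z)}e^{-2|z|^2}=\lambda\,|f(z)/\lambda|^{p(z)}e^{-p(z)|z|^2}$, so
\[
\frac{2}{\pi}\int_\CC f(z)\overline{g(z)}e^{-2|z|^2}\dif A(z)=\frac{2\lambda}{\pi}\,\rho_\p(f/\lambda).
\]
Invoking the standard fact that $\rho_\p(f/\|f\|_{\mathcal{L}^\p})=1$ whenever $0<\|f\|_{\mathcal{L}^\p}<\infty$ and $p_+<\infty$ (a consequence of the continuity of the modular in $\lambda$) then yields $\interleave f\interleave_{\mathcal{L}^\p}\geq \frac{2}{\pi}\|f\|_{\mathcal{L}^\p}$, giving $c=2/\pi$.

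The main technical point I foresee is a mild bookkeeping issue arising from the normalizing factors $C_\p^{-1}$ and $C_{p'(\cdot)}^{-1}$ built into the modulars introduced above: if the admissibility condition $\rho_{p'(\cdot)}(g)\leq 1$ is interpreted with those factors, the identity $\rho_{p'(\cdot)}(g)=\rho_\p(f/\lambda)$ picks up a factor of $C_\p/C_{p'(\cdot)}$, so $g$ must be multiplied by a constant $\tau>0$ depending only on $C_\p,C_{p'(\cdot)}$ and $p_\pm$ to restore admissibility. Such a rescaling only changes the numerical value of $c$, not the equivalence itself; every other step uses only the hypothesis $p\in\mathscr{P}(\CC)$ and the elementary conjugate-exponent arithmetic.
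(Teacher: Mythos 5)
Your proposal is correct and follows essentially the same route as the paper: the upper bound via Theorem~\ref{thm:Holder} together with the unit-ball property of the Luxemburg norm, and the lower bound via the conjugate test function $|f|^{p(\cdot)-1}e^{-(p(\cdot)-2)|\cdot|^2}$ (the paper normalizes $\interleave f\interleave_{\mathcal{L}^{p(\cdot)}}=1$ and deduces $\rho_{p(\cdot)}(f)\leq 1$, while you normalize $f$ by its norm and invoke $\rho_{p(\cdot)}(f/\|f\|_{\mathcal{L}^{p(\cdot)}})=1$, which is an equivalent bookkeeping choice). Your explicit acknowledgment of the $C_{p(\cdot)}/C_{p'(\cdot)}$ rescaling needed for admissibility matches the normalizing factor $\frac{\pi}{2C_{p(\cdot)}}$ the paper builds into its test function.
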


\begin{proof}
Suppose $f\in \mathcal{L}^\p$. Then by H\"older's inequality we have 
\begin{align*}
\interleave f\interleave_{{\mathcal{L}^\p}} &\leq \sup\left\{\frac{2}{\pi}\int_\mathbb{C}|f(z)||h(z)|e^{-2|z|^2}\dif A (z): h\in \mathcal{L}^{p'(\cdot)},~~ \rho_{p'(\cdot)}(h)\leq 1\right\}\\
&\leq \frac{4}{\pi}\sup\left\{\|f\|_{{\mathcal{L}^\p}}\|h\|_{{\mathcal{L}^\pprime}}: h\in \mathcal{L}^{p'(\cdot)},~~ \rho_{p'(\cdot)}(h)\leq 1\right\}\\
&\leq \frac{4}{\pi}\|f\|_\p.
\end{align*}

On the other hand, suppose that $\interleave f\interleave_{\mathcal{L}^\p}= 1$. Define the function $g$ as \[g(z)=\frac{\pi}{2C_\p}|f(z)|^{p(z)-1}e^{-|z|^2(p(z)-2)}.\] Then 
\begin{align*}
\rho_\pprime(g) &= \frac{\pi}{2C_\pprime C_\p}\int_\CC |f(z)|^{(p(z)-1)p'(z)}e^{-|z|^2(p(z)-2)p'(z)}e^{-|z|^2p'(z)}\dif A(z)\\
                         &=\frac{\pi}{2C_\pprime C_\p}\int_\CC |f(z)|^{p(z)}e^{-|z|^2p(z)}\dif A(z)\\
                         &\leq \frac{\pi}{2C_\pprime}\rho_\p(f).
\end{align*}
Thus, $g\in \mathcal{L}^\pprime$ and consequently 
\begin{align*}
\rho_\p(f) &=C_\p^{-1}\int_\CC |f(z)||f(z)|^{p(z)-1}e^{-|z|^2(p(z)-2)}e^{-2|z|^2}\dif A(z)\\
 	       &=\frac{2}{\pi}\int_\CC |f(z)||g(z)|e^{-2|z|^2}\dif A(z)\\
                &\leq \interleave f\interleave_{\mathcal{L}^\p} =1.
\end{align*}

Thus \[\rho_\p\left(\frac{f}{\interleave f\interleave_{\mathcal{L}^\p}}\right)=\rho_\p(f)\leq 1\] which implies that \[\|f\|_{\mathcal{L}^\p}\leq \interleave f\interleave_{\mathcal{L}^\p}.\] The general case $\interleave f\interleave_{\mathcal{L}^\p}\neq 1$ follows from the homogeneity of the norm.
\end{proof}

We pass now to prove that evaluation functionals are continuous. Given $z\in\CC$ the evaluation functional is defined as \[\fullfunction{\gamma_z}{\mathcal{F}^\p}{\CC}{f}{f(z)}.\] We will need a previous lemma.

\begin{lem}
Let $f:\CC\to\CC$ be an entire function and let $R>0$. Then for every $z\in \CC$, \[|f(z)|e^{-|z|^2}\leq \frac{e^{R^2}}{R^2\pi}\int_{B(z,R)}|f(w)|e^{-|w|^2}\dif A(w)\]
\end{lem}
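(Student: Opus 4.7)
The plan is to reduce the claim to the sub-mean-value property of a cleverly chosen entire function. Let me fix $z\in\CC$ and set
\[
g(w)=f(w)\,e^{-2\bar{z}w},
\]
which is entire in $w$. Since $|g|$ is subharmonic, the standard sub-mean-value inequality on the disk $B(z,R)$ yields
\[
|g(z)|\leq \frac{1}{\pi R^2}\int_{B(z,R)}|g(w)|\,\dif A(w).
\]
Note that $|g(z)|=|f(z)|e^{-2|z|^2}$ and $|g(w)|=|f(w)|e^{-2\operatorname{Re}(\bar{z}w)}$.

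Next I would rewrite the exponent using the identity
\[
|w-z|^2=|w|^2-2\operatorname{Re}(\bar{z}w)+|z|^2,
\]
so that $-2\operatorname{Re}(\bar{z}w)+|z|^2=|w-z|^2-|w|^2$. Multiplying the sub-mean-value inequality by $e^{|z|^2}$ therefore gives
\[
|f(z)|e^{-|z|^2}\leq \frac{1}{\pi R^2}\int_{B(z,R)}|f(w)|\,e^{|w-z|^2-|w|^2}\,\dif A(w).
\]
Finally, for every $w\in B(z,R)$ we have $|w-z|^2\leq R^2$, hence $e^{|w-z|^2}\leq e^{R^2}$, and pulling this constant out of the integral produces the stated inequality.

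The only non-routine point is the choice of the auxiliary function $g(w)=f(w)e^{-2\bar{z}w}$; the twist by $e^{-2\bar{z}w}$ is what converts the obstruction (the fact that $|w|^2-|z|^2$ is not bounded on $B(z,R)$) into the bounded quantity $|w-z|^2$. Once this trick is in place, the remainder is just the subharmonicity of $|g|$ and an elementary algebraic identity.
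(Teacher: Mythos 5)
Your proof is correct and is essentially the paper's argument: the paper uses the same twist by $e^{-2\overline{z}w}$, merely translating the disk to the origin first (it sets $g(w)=f(w+z)e^{-2\overline{z}w}$ and applies the mean value inequality at $0$), which is your computation after a change of variables. The key identity converting the unbounded weight discrepancy into $|w-z|^2\leq R^2$ is identical in both.
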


\begin{proof}
Fix $z\in\CC$ and define the entire function \[\fullfunction{g}{\CC}{\CC}{w}{f(w+z)e^{-2\overline{z}w}}.\] By the mean value theorem we have that 
\[|g(0)|\leq \frac{1}{R^2\pi}\int_{B(0,R)}|g(w)|\dif A(w).\] Writing the equation in terms of $f$ and multiplying by $e^{-|z|^2}$ we get 
\begin{align*}
|f(z)|e^{-|z|^2}&\leq \frac{1}{R^2\pi}\int_{B(0,R)}|f(w+z)||e^{-2\overline{z}w}|e^{-|z|^2}\dif A(w)\\
                      &\leq \frac{e^{R^2}}{R^2\pi}\int_{B(z,R)}|f(w)|e^{-|w|^2}\dif A(w).\\
\end{align*}
\end{proof}

\begin{thm}
Suppose that $p\in \mathscr{P}(\CC)$. Then there exists a constant $C>0$ such that for every $f\in\mathcal{F}^\p$ \[|f(z)|\leq Ce^{|z|^2}\|f\|_{\mathcal{F}^\p}\]
\end{thm}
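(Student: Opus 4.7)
The plan is to combine the preceding lemma (which gives a pointwise $L^1$-type bound on $|f(z)|e^{-|z|^2}$ in terms of an integral over a ball) with the version of Hölder's inequality proved earlier. Fix $z \in \CC$ and $R>0$ (say $R=1$). The lemma reduces the problem to estimating
\[
I(z) := \int_{B(z,R)} |f(w)| e^{-|w|^2} \, \dif A(w).
\]
The key observation is that this integral can be rewritten as
\[
I(z) = \int_\CC |f(w)| \cdot \bigl(\chi_{B(z,R)}(w) e^{|w|^2}\bigr) \cdot e^{-2|w|^2} \, \dif A(w),
\]
so it has exactly the form of the left-hand side of Hölder's inequality (Theorem~\ref{thm:Holder}) with test function $g_z(w) := \chi_{B(z,R)}(w) e^{|w|^2}$.

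Applying Hölder's inequality yields $I(z) \leq 2\|f\|_{\mathcal{L}^\p} \|g_z\|_{\mathcal{L}^\pprime}$, so the entire task reduces to showing that $\|g_z\|_{\mathcal{L}^\pprime}$ is bounded by a constant independent of $z$. This is where the weight $e^{|w|^2}$ in the definition of $g_z$ is crucial: the Gaussian factor in the modular cancels with it exactly, since
\[
|g_z(w)|^{p'(w)} e^{-|w|^2 p'(w)} = \chi_{B(z,R)}(w)\, e^{|w|^2 p'(w)} e^{-|w|^2 p'(w)} = \chi_{B(z,R)}(w).
\]
Hence $\rho_\pprime(g_z) = C_\pprime^{-1}\, \pi R^2$, a quantity independent of $z$. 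A short calculation with the Luxemburg-Nakano norm, using that $p'_- > 1$ and that $\lambda^{-p'(w)} \leq \lambda^{-p'_-}$ for $\lambda \geq 1$, then shows that $\|g_z\|_{\mathcal{L}^\pprime} \leq M$ for some constant $M$ depending only on $R$ and $p$.

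Combining the three ingredients gives
\[
|f(z)| e^{-|z|^2} \leq \frac{e^{R^2}}{R^2 \pi}\, I(z) \leq \frac{2 e^{R^2} M}{R^2 \pi}\, \|f\|_{\mathcal{L}^\p},
\]
which is the required estimate after recalling that $\|f\|_{\mathcal{F}^\p} = \|f\|_{\mathcal{L}^\p}$ on the Fock subspace. The main conceptual obstacle is finding the right test function; once one notices that a Gaussian multiplier $e^{|w|^2}$ inside $g_z$ makes the modular collapse to $|B(z,R)|/C_\pprime$, the rest is routine. The only remaining technical point is passing from a bound on the modular of $g_z$ to a bound on its Luxemburg norm uniformly in $z$, but this is straightforward because the modular itself does not depend on $z$.
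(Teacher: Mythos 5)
Your proof is correct and takes essentially the same route as the paper: apply the mean-value lemma, rewrite the ball integral using the test function $\chi_{B(z,R)}(w)e^{|w|^2}$, and invoke Theorem~\ref{thm:Holder}. In fact you supply a detail the paper leaves implicit after ``the result follows,'' namely the computation showing that the modular of the test function equals $C_{p'(\cdot)}^{-1}\pi R^2$ independently of $z$, so its Luxemburg norm is uniformly bounded.
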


\begin{proof}
Take $R=1$ in the previous lemma to get \[|f(z)|e^{-|z|^2}\leq \frac{e}{\pi}\int_{B(z,1)}|f(w)|e^{-|w|^2}\dif A(w).\] Now, applying theorem \ref{thm:Holder} we get that if $\frac{1}{p(w)}+\frac{1}{p'(w)}=1$ for all $w\in\CC$, then
\begin{align*}
\int_{B(z,1)}|f(w)|e^{-|w|^2}\dif A(w) &= \int_{\CC}|f(w)|e^{|w|^2}\chi_{B(z,1)}(w)e^{-2|w|^2}\dif A(w)\\
&\leq 2\|f\|_{\mathcal{F}^\p}\|e^{|\cdot|^2}\chi_{B(z,1)}\|_{\mathcal{L}^\pprime}.
\end{align*}
 

\noindent and the result follows.
\end{proof}

\begin{cor}
Suppose that $p\in \mathscr{P}(\CC)$. Then $\mathcal{F}^\p$ is a closed subspace of $\mathcal{L}^\p$ and hence it is a Banach space.
\end{cor}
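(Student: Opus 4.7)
The plan is to use the previous theorem (boundedness of point evaluations) to bootstrap $\mathcal{L}^\p$-norm convergence into locally uniform convergence, which is the kind of convergence that preserves holomorphicity. Since $\mathcal{L}^\p$ is known to be a Banach space (it is the generalized Orlicz space associated to $\varphi_3$), the content is precisely that $\mathcal{F}^\p$ is a closed subspace.

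First I would take an arbitrary Cauchy sequence $\{f_n\}\subset \mathcal{F}^\p$ and let $f\in\mathcal{L}^\p$ be its limit (which exists by completeness of $\mathcal{L}^\p$). Applying the preceding theorem to the differences $f_n-f_m\in\mathcal{F}^\p$ yields
\[
|f_n(z)-f_m(z)|\leq Ce^{|z|^2}\|f_n-f_m\|_{\mathcal{F}^\p},
\]
so for each compact set $K\subset\CC$ the constant $Ce^{|z|^2}$ is uniformly bounded on $K$, and the sequence $\{f_n\}$ is uniformly Cauchy on $K$. Hence $f_n$ converges locally uniformly on $\CC$ to some continuous function $g$, and by the classical Weierstrass theorem on uniform limits of holomorphic functions, $g$ is entire.

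Next I would compare $f$ and $g$. Norm convergence in $\mathcal{L}^\p$ implies convergence of the modular $\rho_\p(\lambda(f_n-f))\to 0$ for some $\lambda>0$, hence convergence in measure, so a subsequence $f_{n_k}\to f$ almost everywhere. Since the full sequence already converges pointwise (even locally uniformly) to $g$, we conclude $f=g$ almost everywhere. Thus $f$ agrees a.e.\ with an entire function, so $f\in\mathcal{F}^\p$, which shows $\mathcal{F}^\p$ is closed in $\mathcal{L}^\p$ and therefore a Banach space under the inherited norm.

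I do not expect any real obstacle: the entire argument is a standard closed-graph style reduction, and the only nontrivial input is the point-evaluation bound, which has just been established. The one subtlety worth pausing on is that convergence in the Luxemburg--Nakano norm of a generalized Orlicz space does guarantee a subsequence converging a.e., which follows from the fact that $\rho_\p$-convergence implies convergence in (Lebesgue) measure on every set of finite measure; this is where one uses $p^-\geq 1$ and the explicit form of the weight $e^{-|z|^2 p(z)}$, both of which are harmless here.
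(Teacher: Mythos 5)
Your argument is correct and is precisely the standard reasoning the paper leaves implicit (the corollary is stated without proof as an immediate consequence of the point-evaluation bound): locally uniform convergence from the evaluation estimate, Weierstrass to get an entire limit, and a.e.\ identification with the $\mathcal{L}^{p(\cdot)}$ limit via a subsequence converging in measure. No gaps; the subtlety you flag about modular convergence implying convergence in measure on compacta is handled correctly.
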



As another consequence of the previous theorem, we can now show a relation between $\mathcal{F}^\p$ spaces.

\begin{thm}\label{thm:subset}
Suppose $p,q\in\mathscr{P}(\CC)$ and $p(z)\leq q(z)$ for every $z\in\CC$. Then $\mathcal{F}^\p\subset\mathcal{F}^\q$ and moreover, for every $f\in \mathcal{F}^\p$, \[\|f\|_{\mathcal{F}^\q}\lesssim \|f\|_{\mathcal{F}^\p}.\]
\end{thm}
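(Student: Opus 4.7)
The plan is to exploit the pointwise bound $|f(z)|e^{-|z|^2} \le C\|f\|_{\mathcal{F}^{p(\cdot)}}$ coming from the evaluation functional theorem just proved. Since the integrand of the $q(\cdot)$-modular involves $|f(z)e^{-|z|^2}|^{q(z)}$ and $q(z)\ge p(z)$, splitting the exponent lets the ``extra'' power $q(z)-p(z)$ be absorbed by this uniform bound, leaving us with an integral controlled by the $p(\cdot)$-modular.

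More concretely, set $\lambda=\|f\|_{\mathcal{F}^{p(\cdot)}}$ (the case $\lambda=0$ is trivial) and let $C\ge 1$ denote the constant from the preceding evaluation theorem, so that $|f(z)/\lambda|\,e^{-|z|^2}\le C$ for every $z\in\CC$. For a parameter $\mu\ge C$ to be chosen, I would write
\[
\left|\tfrac{f(z)}{\mu\lambda}e^{-|z|^2}\right|^{q(z)}=\left|\tfrac{f(z)}{\mu\lambda}e^{-|z|^2}\right|^{p(z)}\cdot\left|\tfrac{f(z)}{\mu\lambda}e^{-|z|^2}\right|^{q(z)-p(z)},
\]
and observe that because $\mu\ge C$ the second factor is $\le 1$ pointwise (this is where we use both the uniform bound and $q(z)\ge p(z)$). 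Integrating and using $\mu\ge 1$ to pull out $\mu^{-p_-}$ gives
\[
\rho_{q(\cdot)}\bigl(f/(\mu\lambda)\bigr)\le \mu^{-p_-}\,\frac{C_{p(\cdot)}}{C_{q(\cdot)}}\,\rho_{p(\cdot)}(f/\lambda)\le \mu^{-p_-}\,\frac{C_{p(\cdot)}}{C_{q(\cdot)}}.
\]

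I would then choose $\mu:=\max\bigl(C,\,(C_{p(\cdot)}/C_{q(\cdot)})^{1/p_-}\bigr)$, which makes the right-hand side $\le 1$, and conclude by the definition of the Luxemburg--Nakano norm that $\|f\|_{\mathcal{F}^{q(\cdot)}}\le \mu\lambda = \mu\|f\|_{\mathcal{F}^{p(\cdot)}}$. The normalizing constants $C_{p(\cdot)}$ and $C_{q(\cdot)}$ are harmless: both lie in the interval $[\pi e^{-q_+},\pi]$, so their ratio is bounded by a quantity depending only on $p_+,q_+$, and the constant $\mu$ in fact depends only on $p_-,p_+,q_+$ and on $C$.

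I do not expect a serious obstacle here; the real work was done in establishing the evaluation bound. The only delicate point is bookkeeping the constants so that the inequality $\mu\ge C$ is compatible with the absorption of $C_{p(\cdot)}/C_{q(\cdot)}$; if this ratio is $\le 1$ one simply takes $\mu=C$, and otherwise the definition above handles it uniformly. Finally, since $\mathcal{F}^{p(\cdot)}\subset\mathcal{L}^{p(\cdot)}\cap H(\CC)$ and the inclusion above shows the norm bound, the claimed containment $\mathcal{F}^{p(\cdot)}\subset \mathcal{F}^{q(\cdot)}$ follows at once.
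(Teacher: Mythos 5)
Your argument is correct and is essentially the paper's own proof: both split the exponent as $q(z)=p(z)+(q(z)-p(z))$ and absorb the excess power using the pointwise evaluation bound $|f(z)|\lesssim e^{|z|^2}\|f\|_{\mathcal{F}^{p(\cdot)}}$, then conclude from the modular estimate. The only difference is bookkeeping: the paper normalizes $\|f\|_{\mathcal{F}^{p(\cdot)}}\le 1$ and rescales by convexity at the end, whereas you fold the rescaling into the choice of $\mu$ so the excess factor is bounded by $1$ pointwise.
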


\begin{proof}
Suppose that $f\in \mathcal{F}^\p$ and $\|f\|_{\mathcal{F}^\p}\leq 1$. By the previous theorem, we can choose a constant $C_1>1$ such that for every $z\in \CC$, $|f(z)|\leq C_1e^{|z|^2}\|f\|_{\mathcal{F}^\p}$. Thus,
\begin{align*}
\rho_\q(f) &= C_\q^{-1}\int_\CC |f(z)|^{q(z)}e^{-|z|^2q(z)}\dif A(z)\\
	       &= C_\q^{-1}\int_\CC |f(z)|^{p(z)}|f(z)|^{q(z)-p(z)}e^{-|z|^2q(z)}\dif A(z)\\
	       &\leq C_\q^{-1} \int_\CC |f(z)|^{p(z)}\left|C_1e^{|z|^2}\right|^{q(z)-p(z)}e^{-|z|^2q(z)}\dif A(z)\\
	       &=\frac{C_1^{q^+-p^-}C_\p}{C_\q}\rho_\p(f).\\
\end{align*}
 
 Now for a general $f\in\mathcal{F}^\p$, we apply the previous inequality to $f\|f\|_{\mathcal{F}^\p}^{-1}$ to obtain
 \begin{align*}
 \rho_\q\left(\frac{f}{M\|f\|_{\mathcal{F}^\p}}\right) &\leq \frac{1}{M}\rho_\q\left(\frac{f}{\|f\|_{\mathcal{F}^\p}}\right)\\
 	&\leq 1
 \end{align*}
  and consequently, $\|f\|_{\mathcal{F}^\q}\leq M\|f\|_{\mathcal{F}^\p}$.
\end{proof}

Now we define (formally) the projection $P$ of a function $g$ as 
\begin{equation}\label{eq:projection}
 Pg(z)=\frac{2}{\pi}\int_\CC g(w)e^{2\overline{w}z}e^{-2|w|^2}\dif A(w).
\end{equation}
 It is shown in \cite[section 2.2]{FS} that $P$ is a linear operator that maps $\mathcal{L}^p$ onto $\mathcal{F}^p$. We will show the analogous result for the case of variable exponents by
putting Theorems \ref{thm:ISRA} and \ref{thm:extrap} together.

\begin{thm}
Suppose that $p$ belongs to $\mathscr{P}^{\log} (\CC)$. Then the operator $P$ defined in \eqref{eq:projection} is bounded from $\mathcal{L}^\p$ onto $\mathcal{F}^\p$. 
\end{thm}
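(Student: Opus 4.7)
The plan is to apply Rubio de Francia's extrapolation theorem (Theorem~\ref{thm:extrap}) to the $A_1$-weighted boundedness of $P$ furnished by Theorem~\ref{thm:ISRA}. The key observation is that setting $F(z) = |Pg(z)|\, e^{-|z|^2}$ and $G(z) = |g(z)|\, e^{-|z|^2}$ converts the Fock-type inequality $\|Pg\|_{\mathcal{L}^{p_0}(w)} \lesssim \|g\|_{\mathcal{L}^{p_0}(w)}$ into the ordinary $L^{p_0}(w)$ inequality between $F$ and $G$ that extrapolation takes as input; the Gaussian factor migrates cleanly from the measure into the integrand since $\|h\|_{\mathcal{L}^\p}$ is by definition the standard $L^\p$ norm of $h\, e^{-|\cdot|^2}$.

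Concretely, I would fix $p_0 \in (1, p_-)$, possible because $p \in \mathscr{P}^{\log}(\CC)$ forces $p_- > 1$. Since $A_1 \subset A_{p_0,r}$ for every $r>0$, Theorem~\ref{thm:ISRA} yields
\[
\int_{\CC} |Pg(z)|^{p_0}\, e^{-p_0|z|^2} w(z)\, \dif A(z) \le C_0 \int_{\CC} |g(z)|^{p_0}\, e^{-p_0|z|^2} w(z)\, \dif A(z)
\]
for every $w \in A_1$ and every $g$ in a suitable dense subclass (say, bounded compactly supported functions). Letting $\mathcal{D}$ be the family of pairs $\bigl(|Pg|\, e^{-|\cdot|^2},\, |g|\, e^{-|\cdot|^2}\bigr)$ over that class, the inequality above is precisely the hypothesis \eqref{eq:extrapolation} with this choice of $p_0$.

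Next, I would verify the maximal-function hypothesis. For $p_0 \in (1, p_-)$ the function $(p(\cdot)/p_0)'(z) = p(z)/(p(z)-p_0)$ inherits log-Hölder continuity from $p$ and obeys $1 < [(p(\cdot)/p_0)']_- \le [(p(\cdot)/p_0)']_+ < \infty$, so the Hardy--Littlewood maximal operator is bounded on $L^{(p(\cdot)/p_0)'}(\CC)$ by the classical variable-exponent maximal theorem. Extrapolation then delivers $\|F\|_{L^{p(\cdot)}} \lesssim \|G\|_{L^{p(\cdot)}}$, which unwinds to $\|Pg\|_{\mathcal{L}^{p(\cdot)}} \lesssim \|g\|_{\mathcal{L}^{p(\cdot)}}$; density extends the bound to all of $\mathcal{L}^{p(\cdot)}$. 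To conclude $Pg \in \mathcal{F}^{p(\cdot)}$ (rather than merely $\mathcal{L}^{p(\cdot)}$) I would justify entirety via differentiation under the integral, with locally uniform absolute convergence secured by Theorem~\ref{thm:Holder}; surjectivity is the reproducing identity \eqref{eq:represent} applied to $f \in \mathcal{F}^{p(\cdot)} \subset \mathcal{F}^{p_+}$, the inclusion being Theorem~\ref{thm:subset}.

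The main obstacle I anticipate is largely clerical: sliding the Gaussian multiplier from the measure into the integrand so the extrapolation framework applies, and confirming that $(p(\cdot)/p_0)'$ remains log-Hölder with exponent bounded away from $1$ and $\infty$ after the composition. Once that bookkeeping is straight, the proof is essentially a direct splicing of Theorems~\ref{thm:ISRA} and~\ref{thm:extrap}.
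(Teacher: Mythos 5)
Your proposal is correct and follows essentially the same route as the paper: extrapolation (Theorem~\ref{thm:extrap}) applied to the $A_1$-weighted inequality from Theorem~\ref{thm:ISRA} via the family of pairs $\bigl(|Pg|e^{-|\cdot|^2}, |g|e^{-|\cdot|^2}\bigr)$, with the maximal-operator hypothesis supplied by log-H\"older continuity, a density argument to pass to all of $\mathcal{L}^{\p}$, and surjectivity from the reproducing formula \eqref{eq:represent} together with the inclusion $\mathcal{F}^{\p}\subset\mathcal{F}^{p^+}$. Your added remarks on verifying that $(p(\cdot)/p_0)'$ stays log-H\"older and that $Pg$ is entire are reasonable elaborations of steps the paper leaves implicit, not a different argument.
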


\begin{proof}
First notice that since $\mathcal{F}^\p\subset\mathcal{F}^{p^+}$, then the representation \eqref{eq:represent} holds for every function $g\in \mathcal{F}^\p$ and consequently $Pg=g$. This implies that $P$ is surjective. 

Now we will use Theorem \ref{thm:extrap}.  Since $p\in\mathscr{P}^{\log} (\CC)$, then the Hardy-Littlewood maximal operator is bounded on $L^{(p(\cdot)/p_0)^\prime}(\CC)$ for any $1<p_0<p^-$ (see \cite[section 5.4]{CUF}). Moreover, it was mentioned before that if $w$ is any weight in $A_1$, then it belongs to $A_{p_0,r}$ for any $r>0$.

In consequence, by Theorem \ref{thm:ISRA} we have that there exists a constant $C>0$ such that for any $g\in \mathcal{L}^{p_0}(w)$ the following inequality holds: \[\int_\CC \left|Pg(z)\right|^{p_0}e^{-|z|^2p_0}w(z)\dif A(z)\leq C\int_\CC \left|g(z)\right|^{p_0}e^{-|z|^2p_0}w(z)\dif A(z).\]
Hence, if we define the family \[\mathscr{D}=\left\{\left(e^{-|\cdot|^2}Pg,e^{-|\cdot|^2} g\right):g\in\mathcal{L}^{p_0}(w)\right\}\] then we have the hypotheses of Theorem \ref{thm:extrap}. Thus, there exists a constant $C_\p>0$ such that \[\|e^{-|\cdot|^2}Pg\|_{L^\p(\CC)}\leq C_\p\|e^{-|\cdot|^2}g\|_{L^\p(\CC)}\] for every $g\in \mathcal{L}^{p_0}(w)$ and therefore \[\|Pg\|_{\mathcal{L}^\p(\CC)}\leq C_\p\|g\|_{\mathcal{L}^\p(\CC)}.\] Finally, by the density of $\mathcal{L}^{p_0}$ in $\mathcal{L}^\p$ we get that the inequality holds for every function in $\mathcal{L}^\p$.
\end{proof}

We are almost ready to show a duality result for variable exponent Fock spaces. This will come as a consequence of the corresponding duality result for $\mathcal{L}^\p$ spaces.

\begin{thm}
Suppose that $p\in\mathscr{P}(\CC)$ and let $p'$ be such that $\frac{1}{p(z)}+\frac{1}{p'(z)}=1$ for all $z\in\CC$. Then the dual space of $\mathcal{L}^\p$ is isomorphic to $\mathcal{L}^{\pprime}$ and every functional $\Lambda\in\left(\mathcal{L}^\p\right)^\ast$  is of the type \[f\mapsto \frac{2}{\pi}\int_\CC f(z)\overline{h(z)}e^{-2|z|^2}\dif A(z)\] and \[\|\Lambda\|\sim\|h\|_{\mathcal{L}^\pprime}.\]
\end{thm}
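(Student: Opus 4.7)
My plan is to reduce the statement to the standard duality theorem for variable exponent Lebesgue spaces $L^{p(\cdot)}$ via a weighted multiplication map. First I would define $M\colon \mathcal{L}^{p(\cdot)} \to L^{p(\cdot)}$ by $Mf(z) = f(z)e^{-|z|^2}$. A direct modular computation gives
\[
\int_\CC |Mf(z)|^{p(z)}\,\dif A(z) = C_{p(\cdot)}\,\rho_{p(\cdot)}(f),
\]
so the Luxemburg norms $\|Mf\|_{L^{p(\cdot)}}$ and $\|f\|_{\mathcal{L}^{p(\cdot)}}$ are comparable, with constants depending only on $C_{p(\cdot)}$ and $p_\pm$. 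Hence $M$ is a Banach space isomorphism onto $L^{p(\cdot)}$.

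Given $\Lambda \in (\mathcal{L}^{p(\cdot)})^\ast$, I would transport it to $\widetilde{\Lambda} := \Lambda \circ M^{-1} \in (L^{p(\cdot)})^\ast$. By the Kov\'a\v{c}ik--R\'akosn\'{\i}k duality theorem (see \cite{kovrak} or \cite[Thm.~2.80]{CUF}), there exists $k \in L^{p'(\cdot)}$ with $\|k\|_{L^{p'(\cdot)}} \asymp \|\widetilde{\Lambda}\|$ such that $\widetilde{\Lambda}(g) = \int_{\CC} g(z)\overline{k(z)}\,\dif A(z)$ for every $g \in L^{p(\cdot)}$ (the complex-valued setup allows one to pass freely between $k$ and $\bar k$ in this representation). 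Setting $h(z) := \tfrac{\pi}{2}\, k(z)\, e^{|z|^2}$ one has $h(z)e^{-|z|^2} = \tfrac{\pi}{2}k(z) \in L^{p'(\cdot)}$, so $h \in \mathcal{L}^{p'(\cdot)}$ with $\|h\|_{\mathcal{L}^{p'(\cdot)}} \asymp \|k\|_{L^{p'(\cdot)}}$.

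Next, for $f \in \mathcal{L}^{p(\cdot)}$, writing $g = Mf = fe^{-|\cdot|^2}$ and substituting back yields
\[
\Lambda(f) = \widetilde{\Lambda}(g) = \int_\CC f(z)e^{-|z|^2}\overline{k(z)}\,\dif A(z) = \frac{2}{\pi}\int_\CC f(z)\overline{h(z)}\,e^{-2|z|^2}\,\dif A(z),
\]
which is the claimed representation; chaining the three norm equivalences gives $\|\Lambda\| \asymp \|h\|_{\mathcal{L}^{p'(\cdot)}}$. The converse direction, that every $h \in \mathcal{L}^{p'(\cdot)}$ induces such a continuous functional with norm $\lesssim \|h\|_{\mathcal{L}^{p'(\cdot)}}$, is immediate from Theorem~\ref{thm:Holder}, and this also provides the lower bound in the norm estimate.

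The one place where care is required --- though I do not expect it to be a genuine obstacle --- is the bookkeeping around the renormalization constant $C_{p(\cdot)}^{-1}$ built into $\rho_{p(\cdot)}$: scaling a modular by a positive constant $C$ alters its Luxemburg norm by a factor lying between $C^{1/p_+}$ and $C^{1/p_-}$. Once this is accounted for in the comparison of $\|Mf\|_{L^{p(\cdot)}}$ with $\|f\|_{\mathcal{L}^{p(\cdot)}}$, the theorem becomes essentially a transport of the classical scalar variable Lebesgue duality along the isomorphism $M$.
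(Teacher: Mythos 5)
Your proposal is correct and follows essentially the same route as the paper: both arguments transport the functional to the classical variable exponent space $L^{p(\cdot)}$ via the Gaussian multiplication isomorphism (your $M\colon f\mapsto fe^{-|\cdot|^2}$ is, up to the constant $C_{p(\cdot)}$, the inverse of the paper's $Q_{p(\cdot)}$), invoke the Kov\'a\v{c}ik--R\'akosn\'{\i}k duality there, and pull the representing function back, with the easy direction handled by the weighted H\"older inequality. Your explicit accounting of how the constant $C_{p(\cdot)}$ in the modular affects the Luxemburg norm is in fact slightly more careful than the paper's claim that $Q_{p(\cdot)}$ is an exact isometry.
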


\begin{proof}
First notice that the operator \[\fullfunction{Q_\p}{L^\p}{\mathcal{L}^\p}{f}{C_\p e^{|\cdot|^2}f}\] satisfies that for every $f\in L^\p$, it holds that \[\rho_\p\left(\frac{Q_\p f}{\lambda}\right)=\rho_{\varphi_3}\left(\frac{f}{\lambda}\right).\] This implies that \[\|Q_\p f\|_{\mathcal{L}^\p}=\|f\|_{L^\p}.\]

Now suppose that $h\in \mathcal{L}^\pprime$ and define the linear functional \[\fullfunction{\Lambda_h}{\mathcal{L}^\p}{\CC}{f}{\frac{2}{\pi}\int_\CC f(z)\overline{h(z)}e^{-2|z|^2}\dif A(z)}.\] By Theorem \ref{thm:Holder} we have that \[|\Lambda_h(f)|\leq \frac{4}{\pi}\|f\|_{\mathcal{L}^\p}\|h\|_{\mathcal{L}^\pprime}\] and consequently $\lambda_h\in \left(\mathcal{L}^\p\right)^\ast$.
On the other hand, suppose that $\Lambda\in \left(\mathcal{L}^\p\right)^\ast$ and define \[\fullfunction{\Gamma}{L^\p}{\CC}{g}{C_\p^{-1}\Lambda Q_\p g}.\] Since $\Lambda$ and $Q_\p$ are bounded, then $\Gamma\in(L^\p)^\ast$ and by the duality result for variable exponent Lebesgue spaces (see for example \cite[Section 2.8]{CUF}), there exists a function $u\in L^\pprime$ such that $\|u\|_{L^\pprime} \sim \|\Gamma\| $ and for every  $g\in L^\p$ \[\Gamma g=\int_\CC g(z)\overline{u(z)}\dif A(z).\] Take $h=\frac{\pi}{2C_\pprime} Q_\pprime u\in \mathcal{L}^\pprime$ and notice that if $f\in\mathcal{L}^\p$, then
\begin{align*}
\Lambda f &= C_\p\Gamma Q_\p^{-1}f\\
                 &= \frac{2}{\pi}\int_\CC f(z)\overline{g(z)}e^{-2|z|^2}\dif A(z).
\end{align*}
Moreover, $\|\Lambda\|=\|\Gamma\|\sim\|u\|_{L^\pprime}=\|h\|_{\mathcal{L}^\p}$
\end{proof}

\begin{remark}
In the previous theorem, the constant $\frac{2}{\pi}$ is being considered in order to keep the correspondence with the representation given by equation \eqref{eq:represent}.
\end{remark}

\begin{thm}
Suppose that $p\in\mathscr{P}^{\log}(\CC)$ and let $p'$ be such that $\frac{1}{p(z)}+\frac{1}{p'(z)}=1$ for all $z\in\CC$. Then the dual space of $\mathcal{F}^\p$ is isomorphic to $\mathcal{F}^{\pprime}$ and every functional $\Lambda\in\left(\mathcal{F}^\p\right)^\ast$  is of the type \[f\mapsto \langle f, h\rangle=\frac{2}{\pi}\int_\CC f(z)\overline{h(z)}e^{-2|z|^2}\dif A(z)\] and \[\|\Lambda\|\sim\|h\|_{\mathcal{F}^\pprime}.\]
\end{thm}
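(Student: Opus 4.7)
The plan is to argue in two directions, using the duality result for $\mathcal{L}^\p$ already established together with the boundedness of the Bergman-type projection $P$.

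For the easy direction, given $h\in\mathcal{F}^{\pprime}\subset\mathcal{L}^\pprime$, Hölder's inequality (Theorem~\ref{thm:Holder}) applied to $f\in\mathcal{F}^\p\subset\mathcal{L}^\p$ gives at once
\[
|\langle f,h\rangle|\leq \tfrac{4}{\pi}\|f\|_{\mathcal{F}^\p}\|h\|_{\mathcal{F}^\pprime},
\]
so the functional $\Lambda_h:f\mapsto\langle f,h\rangle$ lies in $(\mathcal{F}^\p)^*$ with $\|\Lambda_h\|\lesssim\|h\|_{\mathcal{F}^\pprime}$.

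For the converse, suppose $\Lambda\in(\mathcal{F}^\p)^*$. Since $\mathcal{F}^\p$ is a closed subspace of $\mathcal{L}^\p$ (by the corollary following the evaluation functional theorem), the Hahn--Banach theorem yields an extension $\tilde\Lambda\in(\mathcal{L}^\p)^*$ with $\|\tilde\Lambda\|=\|\Lambda\|$. By the duality theorem for $\mathcal{L}^\p$ just proved, there exists $u\in\mathcal{L}^\pprime$ with $\|u\|_{\mathcal{L}^\pprime}\sim\|\tilde\Lambda\|=\|\Lambda\|$ such that
\[
\tilde\Lambda f=\frac{2}{\pi}\int_{\CC} f(z)\overline{u(z)}e^{-2|z|^2}\dif A(z)\quad\text{for all }f\in\mathcal{L}^\p.
\]
The key step is now to replace the representer $u$, which only lives in $\mathcal{L}^\pprime$, by an element of $\mathcal{F}^\pprime$. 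For this I would use that $\mathcal{F}^\p\subset\mathcal{F}^{p^+}$ (Theorem~\ref{thm:subset}) so that the reproducing formula \eqref{eq:represent} holds and thus $f=Pf$ for every $f\in\mathcal{F}^\p$. Inserting this into the integral and applying Fubini (the double integrand is absolutely integrable because $f\in\mathcal{F}^{p^+}$ with the pointwise estimate $|f(w)|\lesssim e^{|w|^2}\|f\|_{\mathcal{F}^\p}$ makes the dominating bound finite), one obtains
\[
\Lambda f=\frac{2}{\pi}\int_{\CC} f(w)\overline{Pu(w)}e^{-2|w|^2}\dif A(w),
\]
so setting $h:=Pu$ produces the desired representer.

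Since $p\in\mathscr{P}^{\log}(\CC)$ implies $p'\in\mathscr{P}^{\log}(\CC)$, the projection theorem applies with exponent $\pprime$ and gives $P:\mathcal{L}^\pprime\to\mathcal{F}^\pprime$ bounded. Hence $h=Pu\in\mathcal{F}^\pprime$ with
\[
\|h\|_{\mathcal{F}^\pprime}=\|Pu\|_{\mathcal{L}^\pprime}\lesssim\|u\|_{\mathcal{L}^\pprime}\sim\|\Lambda\|,
\]
which combined with the upper bound from Hölder yields the equivalence $\|\Lambda\|\sim\|h\|_{\mathcal{F}^\pprime}$. The main obstacle I anticipate is making the exchange of order of integration that turns the $u$-representer into $Pu$ rigorous; the pointwise growth estimate for functions in $\mathcal{F}^\p$ together with Hölder's inequality in $\mathcal{L}^\p$--$\mathcal{L}^\pprime$ should supply the dominating integrand needed for Fubini. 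A secondary (minor) point is that the $h$ produced in the converse direction is automatically unique up to a null set because two such representers would give an element of $\mathcal{F}^\pprime$ annihilating all of $\mathcal{F}^\p$, which, by testing against reproducing kernels $K_z\in\mathcal{F}^\p$, must vanish identically.
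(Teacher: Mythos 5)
Your overall architecture is exactly the paper's: H\"older's inequality gives the easy direction; for the converse you extend $\Lambda$ by Hahn--Banach, represent the extension by some $u\in\mathcal{L}^{\pprime}$ via the $\mathcal{L}^\p$-duality theorem, and then push $u$ down to $h=Pu\in\mathcal{F}^{\pprime}$ using $f=Pf$ and Fubini. The one genuine gap is precisely the step you flagged: your proposed justification of Fubini does not work. If you dominate $|f(w)|$ by $e^{|w|^2}\|f\|_{\mathcal{F}^\p}$, the outer factor becomes $e^{-|w|^2}$, and the inner integral $\int_\CC|e^{2z\overline w}||\tilde h(z)|e^{-2|z|^2}\dif A(z)$, estimated crudely via $|e^{2z\overline w}|\le e^{2|z||w|}$ and H\"older, is of order $e^{|w|^2}$ (since $\|e^{2|\cdot||w|}\|_{\mathcal{L}^\p}\asymp e^{|w|^2}$ up to polynomial factors); the product is then bounded below by a constant and $\int_\CC 1\,\dif A(w)=\infty$. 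The same divergence appears if you keep $f$ and apply H\"older in $w$, because $\|e^{|\cdot|^2}\|_{\mathcal{L}^{\pprime}}=\infty$. The point is that any estimate which replaces $|e^{2z\overline w}|=e^{2\operatorname{Re}(z\overline w)}$ by $e^{2|z||w|}$ destroys the crucial cancellation $|e^{2z\overline w}|e^{-|z|^2-|w|^2}=e^{-|z-w|^2}$, leaving the non-integrable kernel $e^{-(|z|-|w|)^2}$.

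The paper closes this gap differently: it bounds the iterated absolute integral by $\|f\|_{\mathcal{L}^\p}\,\|J\tilde h\|_{\mathcal{L}^{\pprime}}$, where $Jg(w)=\int_\CC|g(z)e^{2\overline z w}e^{-2|z|^2}|\dif A(z)$ is the positive majorant of $P$, and then invokes Remark~\ref{cor:J_op} together with Theorem~\ref{thm:ISRA} and the extrapolation Theorem~\ref{thm:extrap} to get $\|J\tilde h\|_{\mathcal{L}^{\pprime}}\lesssim\|\tilde h\|_{\mathcal{L}^{\pprime}}$. In other words, the boundedness of the \emph{positive} operator $J$ on $\mathcal{L}^{\pprime}$ is the actual dominating-integrand argument; you should replace your pointwise-growth reasoning with this. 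Your remaining points are fine: the observation that $p\in\mathscr P^{\log}(\CC)$ implies $p'\in\mathscr P^{\log}(\CC)$, so that the projection theorem applies with exponent $\pprime$, is correct (and stated more carefully than in the paper), and the final norm equivalence and the uniqueness remark via reproducing kernels are sound.
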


\begin{proof}
First, by a similar reasoning as in the previous theorem, for every function $h\in \mathcal{F}^\pprime$ the linear functional \[\fullfunction{\Lambda_h}{\mathcal{F}^\p}{\CC}{f}{\frac{2}{\pi}\int_\CC f(z)\overline{h(z)}e^{-2|z|^2}\dif A(z)}\] is bounded.

On the other hand, if $\Lambda\in\left(\mathcal{F}^\p\right)^\ast$ then since $\mathcal{F}^\p$ is a closed subset of $\mathcal{L}^\p$, then we use Hahn-Banach theorem to extend $\Lambda$ to a bounded linear functional $\tilde{\Lambda}\in \left(\mathcal{L}^\p\right)^\ast$ with $\|\tilde{\Lambda}\|=\|\Lambda\|$. By the previous theorem, there exists a function $\tilde{h}\in \mathcal{L}^\pprime$ such that $\|\tilde{h}\|=\|\tilde{\Lambda}\|$ and for every $f\in \mathcal{L}^\p$, \[\Lambda(f)=\frac{2}{\pi}\int_\CC f(z)\overline{\tilde{h}(z)}e^{-2|z|^2}\dif A(z).\] Take $h=P\tilde{h}$ where $P$ denotes the projection defined in equation \eqref{eq:projection}. Since $P$ is bounded, then $\|h\|_{\mathcal{F}^\p}\lesssim\|\tilde{h}\|_{\mathcal{L}^\p}$. 
Moreover,  if $f\in \mathcal{F}^\p$, then we know that $Pf=f$ and consequently, 
\begin{align*}
\Lambda f &= \tilde{\Lambda}f\\
                 &= \frac{2}{\pi}\int_\CC f(z)\overline{\tilde{h}(z)}e^{-2|z|^2}\dif A(z)\\
                 &= \frac{4}{\pi^2}\int_\CC f(w) e^{-2|w|^2}\int_\CC e^{2z\overline{w}}\overline{\tilde{h}(z)}e^{-2|z|^2}\dif A(z)\dif A(w)\\
                 &= \frac{2}{\pi}\int_\CC f(w) \overline{h(w)}e^{-2|w|^2}\dif A(w).
\end{align*}
Where the use of Fubini's theorem is justified since   \[\int_\CC \left| f(w) e^{-2|w|^2}\int_\CC e^{2z\overline{w}}\overline{\tilde{h}(z)}e^{-2|z|^2}\dif A(z)\right|\dif A(w)\lesssim  \|f\|_{\mathcal{L}^\p}\|J\tilde{h}\|_{\mathcal{L}^\pprime}\] and using Corollary \ref{cor:J_op} in combination to \ref{thm:extrap} we conclude that \[\|J\tilde{h}\|_{\mathcal{L}^\pprime}\lesssim\|\tilde{h}\|_{\mathcal{L}^\pprime}.\]

Finally, again by H\"older's inequality we have that $\|\Lambda\|\lesssim \|h\|_{\mathcal{F}^\p}$ which implies that \[\|\Lambda\|\sim \|h\|_{\mathcal{F}^\p}.\]
\end{proof}

We finish this article with one consequence of the previous duality. Recall that if  $p\in\mathscr{P}(\CC)$, then $\mathcal{F}^{p^-}\subset\mathcal{F}^\p\subset \mathcal{F}^{p^+}$ and as an immediate corollary, we get that $\{K_z:z\in\CC\}\subset\mathcal{F}^\p$. Consequently,  every $f\in\mathcal{F}^\p$ satisfies the representation \eqref{eq:represent}.  Moreover,  denote as $V$ the closed vector subspace of $\mathcal{F}^\p$ generated by the set $\{K_z:z\in\CC\}$ and suppose that $f\in \mathcal{F}^\p\setminus V$, then there exists $h\in \mathcal{F}^{p'(\cdot)}$, $h\neq 0$ such that $\langle K_z, h\rangle=0$ for every $z\in \CC$, but then $h\equiv 0$, a contradiction. We record this in the following corollary.

\begin{cor}
Suppose $p\in\mathscr{P}(\CC)$ then the set of all linear combinations of reproducing kernels is dense in $\mathcal{F}^\p$.
\end{cor}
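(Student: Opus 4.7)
The plan is to run a standard Hahn--Banach duality argument, exploiting the characterization of $(\mathcal{F}^\p)^*$ established in the theorem immediately preceding the corollary. Let $V$ denote the closed linear span of $\{K_z : z\in\CC\}$ inside $\mathcal{F}^\p$; the goal is to show $V=\mathcal{F}^\p$.

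First I would verify that each kernel actually lies in $\mathcal{F}^\p$. Since $K_z(w)=e^{2w\overline{z}}$ belongs to every classical Fock space $\mathcal{F}^q$ with constant $q\ge 1$, in particular $K_z\in\mathcal{F}^{p^-}$. Theorem \ref{thm:subset}, applied to the constant exponent $p^-$ and the variable exponent $p(\cdot)$ (with $p^-\le p(z)$ pointwise), then gives $\mathcal{F}^{p^-}\subset\mathcal{F}^\p$, so $K_z\in\mathcal{F}^\p$ and $V$ is a well-defined closed subspace. This is the content of the remarks preceding the corollary.

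Next, assume for contradiction that $V\subsetneq\mathcal{F}^\p$. By Hahn--Banach there exists a nonzero $\Lambda\in(\mathcal{F}^\p)^*$ with $\Lambda|_V\equiv 0$. Applying the duality theorem just proved, $\Lambda$ is realized as $\Lambda(f)=\frac{2}{\pi}\int_\CC f(z)\overline{h(z)}e^{-2|z|^2}\,\dif A(z)$ for some nonzero $h\in\mathcal{F}^{\pprime}$. Evaluating on $f=K_z$ and using the reproducing formula \eqref{eq:represent} for $h$, one gets $\Lambda(K_z)=\overline{h(z)}$ for every $z\in\CC$. Since $\Lambda$ kills all kernels, $h\equiv 0$, contradicting $h\ne 0$, and therefore $V=\mathcal{F}^\p$.

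The one genuine check is justifying that \eqref{eq:represent} applies to $h$. This is handled by another application of Theorem \ref{thm:subset}, this time to $p'(\cdot)$: the log-H\"older hypothesis implicit in the duality theorem forces $1<p^-\le p^+<\infty$, hence $(p')^+<\infty$, and so $\mathcal{F}^{\pprime}\subset\mathcal{F}^{(p')^+}$, where the reproducing identity is classical (equation \eqref{eq:represent} from \cite{tung}). Apart from this, the argument is pure bookkeeping; the only real ingredient is the duality pairing, which does all the work.
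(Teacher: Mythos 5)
Your argument is correct and is essentially identical to the paper's: both establish $K_z\in\mathcal{F}^\p$ via the inclusion $\mathcal{F}^{p^-}\subset\mathcal{F}^\p$, then suppose the span is proper, produce a nonzero annihilating $h\in\mathcal{F}^{\pprime}$ via Hahn--Banach and the duality theorem, and conclude $h\equiv 0$ from the reproducing formula applied to $h$. Your explicit justification that \eqref{eq:represent} applies to $h$ (via $\mathcal{F}^{\pprime}\subset\mathcal{F}^{(p')^+}$) is a welcome elaboration of a step the paper leaves implicit, but the route is the same.
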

As a direct consequence, we have the density of the set of polynomials in $\mathcal{F}^\p$.
\begin{cor}
Suppose $p\in\mathscr{P}(\CC)$. Then the set of polynomials is dense in $\mathcal{F}^\p$.
\end{cor}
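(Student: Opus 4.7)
The plan is to deduce this from the preceding corollary on density of linear combinations of reproducing kernels, reducing the problem to approximating a single kernel $K_z$ by polynomials. Since a finite linear combination of polynomials is again a polynomial, and the preceding corollary gives density of $\operatorname{span}\{K_z : z\in\CC\}$ in $\mathcal{F}^\p$, it suffices to show that for each fixed $z\in\CC$ the reproducing kernel $K_z$ lies in the $\mathcal{F}^\p$-norm closure of the polynomials.

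Fix $z\in\CC$ and consider the Taylor expansion
\[
K_z(w)=e^{2w\overline{z}}=\sum_{n=0}^{\infty}\frac{(2\overline{z})^n}{n!}w^n,
\qquad S_N(w):=\sum_{n=0}^{N}\frac{(2\overline{z})^n}{n!}w^n.
\]
Each $S_N$ is a polynomial. The next step is to show $S_N\to K_z$ in the $\mathcal{F}^\p$-norm. For this I would pass through the constant-exponent Fock space $\mathcal{F}^{p^-}$, where $p^-\ge 1$ is well-defined since $p\in\mathscr{P}(\CC)$. It is classical (see, e.g., \cite{FS}) that polynomials are dense in $\mathcal{F}^{p^-}$, and in particular the Taylor partial sums $S_N$ converge to $K_z$ in the $\mathcal{F}^{p^-}$-norm.

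To transfer the convergence to $\mathcal{F}^\p$, I would apply Theorem \ref{thm:subset} with constant exponent $p^-$ and variable exponent $\p$; since $p^-\le p(w)$ for every $w\in\CC$, the inclusion $\mathcal{F}^{p^-}\hookrightarrow\mathcal{F}^\p$ is continuous, yielding
\[
\|K_z-S_N\|_{\mathcal{F}^\p}\lesssim \|K_z-S_N\|_{\mathcal{F}^{p^-}}\longrightarrow 0.
\]
Combining this with the previous corollary finishes the proof: any $f\in\mathcal{F}^\p$ is a norm limit of finite linear combinations of kernels $K_{z_j}$, each of which is in turn a norm limit of polynomials.

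There is no serious obstacle here since all the heavy lifting (evaluation estimates, continuous inclusion between variable-exponent Fock spaces, density of kernels) has already been carried out. The only point to handle with some care is that when $p^-=1$ we need polynomial density in the classical $\mathcal{F}^{1}$; this is standard, typically proved via a dilation/mollification argument, and I would simply quote it from \cite{FS}.
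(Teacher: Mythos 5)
Your proposal is correct and matches what the paper intends: the paper offers no explicit argument (it simply calls the statement ``a direct consequence''), and the natural way to fill this in is exactly yours --- approximate each kernel $K_z$ by its Taylor partial sums and transfer the convergence to $\mathcal{F}^{\p}$ via the continuous embedding $\mathcal{F}^{p^-}\hookrightarrow\mathcal{F}^{\p}$ of Theorem \ref{thm:subset}, which the paper has already recorded in the sentence preceding the corollaries.

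One small logical slip: density of polynomials in $\mathcal{F}^{p^-}$ does \emph{not} by itself imply that the particular sequence of Taylor partial sums $S_N$ converges to $K_z$ in that norm (in general, norm convergence of partial sums is a strictly stronger property than polynomial density, as the Hardy space case $p=1$ shows). For the specific function $K_z$ this is harmless and no citation is needed: with $a=2|z|$ one has the uniform bound $|K_z(w)-S_N(w)|\le e^{a|w|}$ together with pointwise convergence to $0$, and since $e^{p^-a|w|}e^{-p^-|w|^2}$ is integrable on $\CC$, dominated convergence gives $\|K_z-S_N\|_{\mathcal{F}^{p^-}}\to 0$ directly for every $p^-\ge 1$, including $p^-=1$. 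Replacing the appeal to generic polynomial density by this one-line estimate closes the only gap in your argument.
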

%

\section*{Acknowledgment}
The authors would like to thank David Cruz--Uribe for pointing out some references and properties of the Gaussian weight.

%
%
%
%
%
%
%
%
%
%
%
%
%
%
%
%
%
%

 \bibliographystyle{plain} 
   \bibliography{biblio2}

\end{document}